\theoremstyle{definition}
\newtheorem{Def}{Definition}%[section]
\theoremstyle{plain}
\newtheorem{Lem}[Def]{Lemma}
\newtheorem{Cor}[Def]{Corollary}
\newtheorem{Pro}[Def]{Proposition}
\newtheorem{Teo}{Theorem}
\theoremstyle{remark}
\newcommand{\ord}{\operatorname{ord}}
\newcommand{\lex}{\operatorname{lex}}
\newcommand{\Sing}{\operatorname{Sing}}
\newcommand{\mult}{\operatorname{mult}}
\newcommand{\R}{{\mathbb R }}
\newcommand{\F}{{\mathcal F }}
\newcommand{\Z}{\mathbb{Z}}
\newcommand{\Q}{\mathbb{Q}}
\newcommand{\N}{\mathbb{N}}
\renewcommand{\P}{\mathbb{P}}
\newcommand{\K}{{\mathcal K}}
\renewcommand{\O}{{\mathcal O}}
\renewcommand\hat{\widehat}
\newcommand{\NO}{{Newton--Okounkov }}
\newcommand{\deq}{\ensuremath{\stackrel{\textrm{def}}{=}}}
\newcommand{\Y}{{Y_\bullet}}
\def\vv{\@ifnextchar[{\@withv}{\@withoutv}}
\def\@withv[#1]#2#3{v(#2,#3;#1)}
\def\@withoutv#1#2{v(#1,#2)}
\def\mm{\@ifnextchar[{\@withm}{\@withoutm}}
\def\@withm[#1]#2#3{\mu_{#1}(#2,#3)}
\def\@withoutm#1#2{\hat\mu(#1,#2)}
\begin{document}

\author{Joaquim Ro\'e
\footnote{Partially supported by MTM 2013-40680-P (Spanish MICINN grant)
and 2014 SGR 114 (Catalan AGAUR grant).}}
\title{Local positivity in terms of Newton--Okounkov bodies}

\maketitle

\begin{abstract}
In recent years, the study of \NO bodies on normal varieties
has become a central subject in the asymptotic theory of linear series,
after its introduction by Lazarsfeld--Musta\c{t}\u{a} 
and Kaveh--Khovanskii.
One reason for this is that they encode all numerical equivalence
information of divisor classes (by work of Jow). At the same time,
they can be seen as local positivity invariants, and
K\"uronya-Lozovanu have studied them in depth from this
point of view.

We determine what information is encoded by the set of all \NO 
bodies of a big divisor with respect to flags centered at a fixed point 
of a surface, by showing that it determines and is determined by
the numerical equivalence class of the divisor
up to negative components in the Zariski decomposition that
do not go through the fixed point. 

%We work over the field of complex numbers.
\end{abstract}

\section{Introduction}
\paragraph {\NO bodies}
Inspired by the work of A.~Okounkov \cite{Ok}, R.~La\-zarsfeld and
M.~Musta\c{t}\u{a} \cite{LazMus09} and independently
K.~Kaveh and A.~Khovanskii \cite{KK09} introduced \NO bodies
as a tool in the asymptotic theory of linear series on normal varieties, a tool
which proved to be very powerful and in recent developments of the theory
has gained a central role. An excellent introduction to the subject
---not exhaustive due to the rapid development of the theory--- 
can be found in the review \cite{Bou13} by S.~Boucksom.

\NO bodies are defined as follows. Let $X$ be a normal projective variety of 
of dimension $n$. %By \emph{divisor} we always mean a Cartier divisor on \(X\).
A flag of irreducible subvarieties
\[\Y=\left\{X=Y_0 \supset Y_{1} \supset \dots \supset Y_n=\{p\}\right\} \]
is called \emph{full} and \emph{admissible} if 
\(Y_i\) has codimension \(i\) in \(X\) and 
is smooth at the point $p$. \(p\) is called the \emph{center} of the flag.
For every  non-zero rational function $\phi\in K(X)$, write \(\phi_0=\phi\),
and for \(i=1,\dots,n\)
\begin{equation}
\label{eq:flag_valuation}
   \nu_i(\phi) \deq \ord_{Y_i}(\phi_{i-1})\ , \qquad
 \phi_i \deq \left.\frac{\phi_{i-1}}{g_i^{\nu_i(\phi)}}\right|_{Y_i},
 %, \qquad \nu_2(\phi) \deq \ord_{p}(\phi_1\big|_C)
\end{equation}
where $g_i$ is a local equation of $Y_i$ in \(Y_{i-1}\)  around $p$
(this makes sense because the flag is admissible). 
The sequence $\nu_\Y=(\nu_1,\nu_2,\dots,\nu_n)$ determines a rank \(n\) 
discrete valuation
\(K(X)^* \longrightarrow \Z^n_{\lex}\)
with center at \(p\)  \cite{ZS75}. 

\begin{Def}
If $X$ is a normal projective variety, $D$ a big Cartier divisor on it, and 
$\Y$ an admissible flag, the \NO
body of $D$ with respect to $\Y$ is
\[
\Delta_{\Y}(D) \deq\overline{\left\{\left.\frac{\nu_\Y(\phi)}{k}\right| 
\phi\in H^0(X,\O_X(kD)),\,k\in \N \right\}}\subset \R^n,
\] 
where $\overline{\{\cdot\}}$ denotes the closure with respect
to the usual topology of $\R^2$. Although not obvious from this definition,
\(\Delta_{\Y}(D)\) is convex and compact, with nonempty interior,
i.e., a body (see \cite{KK09}, \cite{LazMus09}, \cite{Bou13}). 
A.~K\"uronya, V.~Lozovanu and C.~Maclean \cite{KLM} have shown that
it is a polygon if \(X\) is a surface, and that in higher dimensions it can 
be non-polyhedral, even if \(X\) is a Mori dream space.
The definition can be carried over to 
the more general setting of graded linear series, 
% using the closure of the \emph{convex hull} of the set of normalized values, 
and also to \(\Q\) or \(\R\)-divisors; in the absence
of some bigness condition, \(\Delta_{\Y}(D)\) may fail to have top dimension.

 \end{Def}
By  \cite[Proposition~4.1]{LazMus09}, $\Delta_{\Y}(D)$
only depends on the  numerical equivalence class of $D$. 
S.~Y.~Jow proved in \cite{Jow} that the set of all \NO bodies works as a complete
set of numerical invariants of $D$, in the sense that, if $D'$
is another big Cartier divisor with $\Delta_{\Y}(D)=\Delta_{\Y}(D')$
for all flags $\Y$, then $D$ and $D'$ are numerically equivalent.

\paragraph{Flags on proper models of $K(X)$}
%From the perspective of birational geometry, it is natural
%to consider also \NO bodies of \(D\) defined by flags on 
%varieties birational to \(X\).
It is most natural to define \NO bodies with respect to any valuation \(\nu\)
with value group equal to \(\Z^{\dim X}_{\lex}\), and not only those coming from flags
on \(X\) (see \cite{KK09}, \cite{Bou13}). 
Thus we consider
admissible flags on arbitrary birational models 
of \(X\), noting that even to express the results for flags lying on \(X\) 
(theorem \ref{teo:smooth} below) we
need to consider clusters of infinitely near points.

\begin{Def}
We call \emph{admissible flag for} $X$ any
admissible flag $\Y$ on \(\tilde X\) where
$\pi:\tilde X\rightarrow X$ is a proper birational morphism.
Whenever we need to specify the map we will use
the notation 
\[\Y=\left\{X\overset{\pi}\longleftarrow \tilde X=
Y_0 \supset Y_{1} \supset \dots \supset Y_n=\{p\}\right\} \]
but mostly we omit
an explicit mention of the model $\tilde X$ on which
$p$ and $Y_i$ lie. The point $\pi(p)=O\in X$ will be called the 
\emph{center} of the flag on $X$; if $\pi$ contracts the whole
flag, i.e., $\pi(Y_1)=\pi(p)=O$ then we 
say that $\Y$ is an \emph{infinitesimal} flag, and if
\(\operatorname{codim} \pi(Y_i)=i\) then it is a 
\emph{proper} flag. If \(\tilde X=X, \pi=id_X\), we say that
the flag is \emph{smooth} at \(O\). The corresponding \NO
bodies will be also called infinitesimal, proper or smooth
accordingly.
\end{Def}

Already Lazarsfeld--Musta\c{t}\u{a} 
\cite{LazMus09} considered 
\NO bodies of \(D\) defined by flags on varieties birational to \(X\)
---more precisely,  flags contained
in the exceptional divisor of a blowup of \(X\), with the goal of making a canonical 
choice of ``generic infinitesimal'' flag and getting rid of the arbitraryness
of the choice of a flag---. 
A.~K\"uronya and V.~Lozovanu \cite{KL} have pushed forward
the study of infinitesimal flags, with the philosophical viewpoint that the
``local positivity'' of \(D\) at a smooth point \(O\) should be governed by
the set of \NO bodies \(\Delta_{\Y}(D)\) where the flag \(\Y\)
is centered at \(O\). This raises the question of what information 
on \(L\) is contained
in the set of \NO bodies \(\Delta_{\Y}(D)\) with fixed center,
analogously to Jow's result for the set of all \NO bodies.
In the case when \(X\) is a surface, we provide a complete
answer which supports
 the ``local positivity'' viewpoint, and we prove that
\NO bodies given by infinitesimal flags suffice to determine all
\NO bodies given by flags centered at \(O\).

\paragraph{Clusters of infinitely near points}
Fix \(X\) a projective surface, and \(O\in X\) a smooth point.
A point infintely near to \(O\) is a smooth point \(p\in\tilde X\),
where \(\pi:\tilde X\rightarrow X\) is a proper birational morphism,
such that \(\pi(p)=O\). 

A finite or infinite set $K$ of points equal or infinitely near to 
$O$, such that for each $p\in K$, $K$ contains all points to which $p$ is
infinitely near, is called a \emph{cluster} of points infinitely near
to $O$. We now review a few facts on clusters that we need,
refering to E.~Casas-Alvero's book \cite{Cas00} for details and proofs.
The simplest example of a cluster is the sequence of images of
a point \(p \in \tilde X\) infinitely near to \(O\): \(\pi_p\) can be factored as a sequence
of $k$ point blowups \(\pi=bl_O\circ bl_{p_1}\circ\dots\circ bl_{p_{k-1}}\), 
\[ X=X_0\overset{bl_O}\longleftarrow
 X_1\overset{bl_{p_1}}\longleftarrow\dots\overset{bl_{p_{k-1}}}\longleftarrow
 X_k=\tilde X=X_p
 \]
and then
\[
 K(p)=\{O,bl_{p_1}\circ\dots\circ bl_{p_{k-1}}(p), \dots,
bl_{p_{k-1}}(p), p\}
\]
is a cluster. A priori, infinitely near points belong to
different surfaces, but we consider the points 
\(p\in X_p\overset{\pi_p}\longrightarrow X\) and 
\(p'\in X_{p'}\overset{\pi_{p'}}\longrightarrow X\) to be the same point
when there is a birational map defined in a neighborhood of \(p\),
\(X_p\supset U_p\rightarrow X_{p'}\), which commutes with 
\(\pi_p, \pi_{p'}\), maps \(p\) to \(p'\)
and is an isomorphism in a (possibly smaller) neighborhood of \(p\). 
Then we can safely assume
that the sequence of points blown up to get the surface where \(p\) lies 
is formed exactly by the points in \(K(p)\) except \(p\) itself:
\(K(p)=\{O,p_1,\dots,p_{k-1},p\}\). In this sense, every infinitely near point
\(p\) has a well defined predecessor, namely the last blown up point
\(p_{k-1}\).

Points infinitely near to \(O\) 
are classified as \emph{satellite} if \(p \in \Sing (\pi^{-1}(O)) \)
and \emph{free} otherwise.
We shall call a cluster 
\(K\) \emph{free} if every \(p \in K\) is free.
A relevant fact when dealing with smooth flags is that there is a smooth curve
through \(O\) whose birational transform in \(\tilde X\) contains 
the infinitely near point \(p\in \tilde X\)
if and only if the cluster \(K (p)\) is free.
It is customary to say that a curve goes through an infinitely near
point \(p\) (or has multiplicity \(m\) there) if its birational transform does so;
we will follow this convention without further notice.

A \emph{weighted cluster} is a pair \(\K=(K,m)\) 
where \(K\) is a cluster and \(m\) is a map
\(m:K\longrightarrow \Z\). A typical example is,
given a proper birational morphism \(\tilde X \overset\pi\longrightarrow X\) 
(factored as above)
and a curve \(C\) through \(O\),
 the set of all points infinitely near to \(O\)
in \(\bigcup_{i=0}^k X_i \) that belong to 
 \(C\), weighted with \(m(p)=\mult_p(\tilde C)\).

Let \(C\subset X\) be a curve through \(O\) which has no smooth 
branch through \(O\). There exists a minimal model
\(\pi:\tilde X \longrightarrow X\) such that, denoting \(\tilde C\)
the strict transform of \(C\), all of the (finitely many) 
points of \(\tilde C\) infinitely near to \(O\) (i.e., 
\(\pi^{-1}(O)\cap \tilde C\)) are satellite.
For any factorization of such a \(\pi\)
 as a sequence of point blowups, the centers of the blowups
 form a free cluster. This cluster, weighted with the multiplicities
 of  \(C\) at its points, will be called the
 cluster of initial free points of \(C\) and denoted \(\mathcal F_C\).
Remark that an equality \(\mathcal F_C=\mathcal F_{C'}\)
means that the minimal model such that the strict transform 
of \(C\) has no free point infinitely near to \(O\)
is also the minimal model such that the strict transform 
of \(C'\)  has no free point infinitely near to \(O\), and moreover the
multiplicities of the strict transforms of \(C\) and \(C'\) at each
blown up point coincide.

\paragraph{Local numerical equivalence on surfaces}
Let still \(X\) be a normal projective surface. 
Every pseudoeffective \(\Q\)-divisor \(D\)
admits a unique \emph{Zariski decomposition} 
\(
D=P+N,
\)
where \(P, N\) are \(\Q\)-divisors with \(P\) nef, \(N\) effective,
the components \(N_i\) of \(N\) have negative definite intersection matrix,
and \(P\cdot N_i=0\). 
Zariski showed in \cite{Zar62} that a unique such decomposition exists 
for any effective divisor \(D\) on a smooth surface ---in what can be considered
a foundational work of the asymptotic theory of linear systems.
The generalization to pseudoeffective \(\Q\)-divisors is due to Fujita \cite{Fuj79}.
The result then carries over to normal surfaces using the intersection theory 
developed by Sakai in \cite{Sak}, see \cite[Theorem 2.2]{Pro}. One should bear in mind
that in this case \(P\) and \(N\) are in general Weil divisors only, even 
if \(D\) is Cartier.

% to pseudoeffective \(\Q\)-divisors and \todo{who?}
%to \(\R\)-divisors as well. 
\begin{Def}
 Fix $O\in X$, and let $D$ be a divisor on $X$, with Zariski decomposition
\(
D=P+N.
\)
We decompose the negative part as
\[
N=N_O+N_O^c
\]
where the support of $N_O$ are exactly the divisors in $N$
which go through $O$. We say that 
\[
D=P+N_O+N_O^c
\]
is the \emph{refined Zariski decomposition at $O$}.
\end{Def}

\begin{Def}
Given two divisors \(D, D'\) on \(X\) with
refined Zariski decompositions at \(O\)
\[
D=P+N_O+N_O^c, \qquad D'=P'+N'_O+{N'}^c_O
\]
we say that $D$ and $D'$ are \emph{numerically equivalent near $O$}
if
\begin{equation}
\label{eq:num_equiv}
 P\equiv P' \text{ and } N_O=N'_O.
\end{equation}

\end{Def}

\paragraph{The main results}\hspace{-.5em}of this paper show that the information
contained in the set of all \NO bodies of a big Cartier divisor \(D\)
with center at a smooth point
\(O\) of a surface is exactly the numerical equivalence
class near \(O\) of \(D\) in the sense above. 
\begin{Teo} \label{teo:main}
Let $D, D'$ be big Cartier divisors on a normal projective surface $X$, 
and let $O\in X$ a smooth point.
 The following are equivalent:
\begin{enumerate}
 \item \label{teo:num_equiv}
 $D$ and $D'$ are numerically equivalent near $O$, i.e.,
 their Zariski decompositions satisfy \eqref{eq:num_equiv}.
 \item \label{teo:center}
 For all admissible flags with center \(O\),
$\Delta_{\Y}(D)=\Delta_{\Y}(D')$.
 \item \label{teo:infinitesimal}
 For all infinitesimal admissible flags with center \(O\),
$\Delta_{\Y}(D)=\Delta_{\Y}(D')$.
 \item \label{teo:curve}
 For all proper admissible  flags with center \(O\),
$\Delta_{\Y}(D)=\Delta_{\Y}(D')$.
\end{enumerate}
\end{Teo}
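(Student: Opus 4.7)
The plan is to establish $(1)\Rightarrow(2)\Rightarrow(3),(4)\Rightarrow(1)$. The implications $(2)\Rightarrow(3)$ and $(2)\Rightarrow(4)$ are immediate, as infinitesimal and proper flags are special cases of admissible flags centered at \(O\). The real content lies in $(1)\Rightarrow(2)$ and in the two reverse implications.

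For \((1)\Rightarrow(2)\) I will exploit Zariski's theorem: for \(k\) sufficiently divisible, multiplication by the canonical section \(s_{kN}\) of \(\O(kN)\) identifies \(H^0(X,\O(\lfloor kP\rfloor))\) with \(H^0(X,\O(kD))\). Applying any flag valuation \(\nu_\Y\) centered above \(O\) and passing to asymptotics yields the translation identity
\[
 \Delta_\Y(D) \equ \nu_\Y(N) + \Delta_\Y(P),
\]
where \(\nu_\Y\) is extended to \(\Q\)-divisors by linearity. The key observation is that, in the splitting \(N=N_O+N_O^c\), every component of \(N_O^c\) misses \(O\), so its strict transform on \(\tilde X\) avoids the center \(p\) of the flag and contributes \(\nu_\Y=0\). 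Therefore \(\Delta_\Y(D)=\nu_\Y(N_O)+\Delta_\Y(P)\), which depends only on \(N_O\) and on the numerical class of \(P\) by \cite[Proposition~4.1]{LazMus09}, giving the implication.

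For \((4)\Rightarrow(1)\) I will reconstruct \(N_O\) and the numerical class of \(P\) from the proper NO bodies. Given an irreducible curve \(C\subset X\) through \(O\), a proper flag whose \(Y_1\) maps birationally onto \(C\) on a suitable model encodes \(\ord_C(N_O)\) as the infimum of the first coordinate of \(\Delta_\Y(D)\)---after subtracting the contribution of the diminished base locus of \(P\), which can be read off from the shape of the body---and determines \(P\cdot C\) via the Lazarsfeld--Musta\c{t}\u{a} surface formula for the slices of \(\Delta_\Y(P)\). Since every very ample linear series on \(X\) contains divisors vanishing at \(O\), irreducible curves through \(O\) span the numerical classes of divisors on \(X\), so the collection of intersections \(P\cdot C\) for \(C\ni O\) pins down the numerical class of \(P\).

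For \((3)\Rightarrow(1)\) I will work on iterated blowups \(\pi_K:\tilde X_K\to X\) resolving clusters \(K\) based at \(O\). An infinitesimal flag with \(Y_1\) an exceptional curve \(E_i\) produced by blowing up an infinitely near point \(p_{i-1}\in K\) computes through its first coordinate the multiplicity \(\mult_{p_{i-1}}(\pi^*_{i-1}D)=\mult_{p_{i-1}}(\pi^*_{i-1}(P+N_O))\), the \(N_O^c\) part again being invisible above \(O\). Varying \(K\) along every branch through \(O\), the resulting multiplicity sequences determine, for each component \(C\) of \(N_O\), the cluster of initial free points \(\mathcal F_C\) together with the multiplicities of \(C\) at its points---hence \(\ord_C(N_O)\)---and, through Noether's formula and the proximity inequalities, they also determine the intersections \(P\cdot C\) for every curve \(C\) through \(O\); the spanning argument then yields the numerical class of \(P\). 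The principal obstacle will lie precisely in this last step: infinitesimal flags do not see curves on \(X\) directly, and separating at each stage of a cluster the multiplicity contribution of \(P\) from that of \(N_O\) requires the full cluster-theoretic apparatus recalled in the introduction.
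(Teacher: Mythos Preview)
Your arguments for $(1)\Rightarrow(2)$ and $(4)\Rightarrow(1)$ are essentially the paper's. One small correction: in $(4)\Rightarrow(1)$ there is nothing to ``subtract'' for the diminished base locus of $P$, since $P$ is nef and hence $\ord_C(\|P\|)=0$ for every curve $C$; the infimum of the first coordinate of $\Delta_\Y(D)$ is already exactly $\ord_C(N_O)$.

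The real divergence, and the real problem, is in $(3)\Rightarrow(1)$. You propose to read off $P\cdot C$ for curves $C\ni O$ from multiplicity sequences along clusters, invoking Noether's formula and the proximity inequalities. But those are \emph{local} tools: Noether's formula computes the local intersection multiplicity $(C\cdot C')_O$, not the global number $P\cdot C$, and $P$ is a numerical class, not a curve with prescribed local behavior at $O$. The first coordinate of an infinitesimal body only records $\ord_E(\pi^*N_O)$ (the $P$-contribution vanishes, contrary to what your displayed formula suggests), so the multiplicity data you extract sees $N_O$ but has no direct handle on $P\cdot C$. As written, this step does not go through.

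The paper circumvents this by proving $(3)\Rightarrow(4)$ rather than $(3)\Rightarrow(1)$. The key lemma is that for a branch cluster $\{p_0=O,p_1,\dots\}$ along an irreducible curve $C$, the infinitesimal valuations $\nu^{(k)}$ attached to the flags $\{X_k\supset E_{p_{k-1}}\supset\{p_k\}\}$ satisfy, for all $k\gg0$,
\[
\nu^{(k)}(\phi)=\begin{pmatrix} k-k_0 & 1\\ 1 & 0\end{pmatrix}\nu_{\Y}(\phi),
\]
where $\Y$ is the proper flag along $C$. Hence equality of infinitely many infinitesimal bodies along the branch forces equality of the proper body, and then your own $(4)\Rightarrow(1)$ argument recovers both $N_O$ and, via ample curves through $O$ spanning $N^1(X)_\Q$, the class of $P$. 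In effect the global quantity $P\cdot C$ is hidden not in the multiplicity axis of the infinitesimal bodies but in their full shape under this linear change of coordinates; that is the missing idea in your sketch.
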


% \[ \eqref{teo:num_equiv} \Rightarrow \eqref{teo:center}
% \Rightarrow \eqref{teo:infinitesimal}\Rightarrow \eqref{teo:curve}
% \Rightarrow \eqref{teo:num_equiv}. \]
It is obvious that \eqref{teo:center} is equivalent to
 [\eqref{teo:infinitesimal} and \eqref{teo:curve}].  
 The skeleton of our proof is as follows:
\[ \eqref{teo:num_equiv} \Rightarrow \eqref{teo:center}, \quad
   \eqref{teo:infinitesimal} \Rightarrow \eqref{teo:curve}
 \Rightarrow \eqref{teo:num_equiv}. \]
 Each implication follows from one or two of the lemmas 
 in section \ref{sec:proofs};
 some of the lemmas are actually stronger than is required
 and may be interesting for themselves.

Remark that it is not enough to know the \NO bodies of \(D\) with
respect to all flags lying on \(X\) with center at \(O\) (smooth flags) 
in order to recover the numerical equivalence class near
\(O\). The information contained in this smaller collection of \NO bodies
is determined in the next theorem, after which it will be easy to give
examples.
Assume  \(D\)  is a divisor with refined Zariski decomposition
 \[ D=P+N_O+N^c_O ,\] and
decompose further \(N_O=N_O^{sing} + N_O^{sm}\)
where \(N_O^{sm}\) is formed by all components with at least
one smooth branch through \(O\). Then the result can be 
expressed in terms of the the clusters of initial free points 
 \(\mathcal F_{N_O^{sing}}\) and \(\mathcal F_{{N'}_O^{sing}}\):
\begin{Teo}\label{teo:smooth}
Fix $O\in X$ a smooth point. 
 Let $D, D'$ be big Cartier divisors on $X$, with refined Zariski decompositions
\[
 D=P+N_O^{sing}+N_O^{sm}+{N}^c_O, \quad
  D'=P'+{N'}_O^{sing}+{N'}_O^{sm}+{N'}^c_O
\]
 The following are equivalent:
\begin{enumerate}
 \item \label{teo:equiv_smooth}
  \(P\equiv P'\), \(N_O^{sm}={N'}_O^{sm}\) and
  \(\mathcal F_{N_O^{sing}}=\mathcal F_{{N'}_O^{sing}}\).
 \item \label{teo:infinitesimal_smooth}
 For almost all infinitesimal admissible flags \(\{\tilde X \supset E\supset \{p\}\}\)
 with center \(O\) such that the cluster \(K(p)\) is free,
$\Delta_{\Y}(D)=\Delta_{\Y}(D')$.
 \item \label{teo:curve_smooth}
 For all smooth admissible  flags with center \(O\),
$\Delta_{\Y}(D)=\Delta_{\Y}(D')$.
\end{enumerate}
\end{Teo}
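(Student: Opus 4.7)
The strategy is to leverage Theorem \ref{teo:main} while identifying precisely which part of the refined Zariski decomposition at $O$ is visible through smooth and free-infinitesimal flags. The key observation is that a smooth curve through $O$ (or equivalently, a free chain of infinitely near points) shares no branch with any component $C$ of $N_O^{sing}$, so $C$ only contributes to the valuation $\nu_{\Y}$ through its intersection multiplicities at \emph{free} infinitely near points of $O$. These numbers are exactly what $\mathcal F_C$ records. Since components of $N_O^c$ do not meet any flag centered at $O$, this suggests that smooth and free-infinitesimal flags see only $P$, $N_O^{sm}$, and $\mathcal F_{N_O^{sing}}$.

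For the forward implications, assuming \eqref{teo:equiv_smooth}, I would invoke a formula from section \ref{sec:proofs} expressing $\Delta_{\Y}(D)$ in terms of the Zariski decomposition of $D$ and of its pullbacks along successive blowups determined by the flag. For smooth and free-infinitesimal flags, each ingredient of that formula depends on $D$ only through $P$, $N_O^{sm}$, and the intersection numbers of components of $N_O^{sing}$ with the strict transforms of $Y_i$; the latter are determined by $\mathcal F_{N_O^{sing}}$. The hypotheses $P\equiv P'$ and $N_O^{sm}={N'}_O^{sm}$ handle the first two ingredients, while $\mathcal F_{N_O^{sing}}=\mathcal F_{{N'}_O^{sing}}$ handles the third.

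For the converse directions, I would reconstruct each piece of \eqref{teo:equiv_smooth} from the collection of NO bodies. The argument of \eqref{teo:curve} $\Rightarrow$ \eqref{teo:num_equiv} of Theorem \ref{teo:main}, specialized to the subfamily of smooth or free-infinitesimal flags under consideration, already yields $P\equiv P'$. To separate $N_O^{sm}$ from $N_O^{sing}$, one picks a smooth curve $Y_1$ sharing a prescribed smooth branch of a candidate component $C$ of $N_O^{sm}$, so that $\nu_{\Y}$ of the defining equation of $C$ jumps and the multiplicity of $C$ in $N$ can be read off. Finally, varying $Y_1$ over smooth curves passing through prescribed sequences of free infinitely near points of $O$ pins down the proximity relations and multiplicity sequence, hence reconstructs $\mathcal F_{N_O^{sing}}$. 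The implication \eqref{teo:infinitesimal_smooth} $\Rightarrow$ \eqref{teo:equiv_smooth} then follows either by the same mechanism on successive blowups, or by density: a smooth flag can be approximated by free-infinitesimal flags along the strict transform of $Y_1$, and this is precisely where the ``almost all'' qualifier in \eqref{teo:infinitesimal_smooth} earns its keep, excluding the thin family of flags on which the NO body may jump.

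The main obstacle I expect is the last step of the converse: recovering \emph{exactly} $\mathcal F_{N_O^{sing}}$, no more and no less. Showing that one recovers at least $\mathcal F$ requires a careful choice of families of test curves, chosen to separate distinct proximity graphs and distinct multiplicity sequences of components; showing that one recovers no more than $\mathcal F$ is implicit in the forward direction, but it is illuminating to exhibit explicit pairs of divisors sharing $\mathcal F$ but differing on satellite multiplicities or on the particular singular branches through $O$ realizing it, whose smooth-flag NO bodies all coincide. Both aspects rest on combinatorial bookkeeping of the cluster that is more delicate than anything needed in Theorem \ref{teo:main}.
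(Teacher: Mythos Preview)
Your proposal is correct and follows essentially the same route as the paper: the cycle $(1)\Rightarrow(2)\Rightarrow(3)\Rightarrow(1)$, with the forward step resting on the observation that for free-cluster flags the valuation of a component of $N_O^{sing}$ depends only on its multiplicities at free infinitely near points (i.e.\ on $\mathcal F$), the middle step on approximating a smooth flag by the sequence of free infinitesimal flags along its branch cluster, and the reverse step on reading off $P$ via ample test curves, $N_O^{sm}$ via flags along smooth branches, and the weights of $\mathcal F_{N_O^{sing}}$ via smooth curves through prescribed free points. The only cosmetic difference is that the paper does not package the forward direction as ``a formula for $\Delta_{\Y}(D)$'' but argues directly that $\nu_{\Y}$ takes the same values on sections of $kD$ and $kD'$; your phrasing is fine but slightly obscures that the actual computation is at the level of valuations rather than of the body.
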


The easiest example in which the set of all smooth \NO bodies
with center at \(O\) does not determine the numerical equivalence class
near \(O\) is given by two big Cartier divisors \(D, D'\) whose negative parts \(N,N'\)
are distinct irreducible curves with ordinary cusps at the same point 
\(O\) and with the same tangent direction (it is not difficult to construct
such divisors on suitable blowups of \(\P^2\)). In that case 
\(\mathcal F_{N_O^{sing}}=\mathcal F_{{N'}_O^{sing}}\) consists of 
two points: \(O\) and the point infinitely  near to it in the direction 
tangent to the cusps, with multiplicities 2 and 1 respectively. Therefore 
all \NO bodies with respect to smooth flags centered at \(O\) coincide,
but \(N_O=N\ne N'=N'_O\). 

The proof, contained in the lemmas of section \ref{sec:proofs}
follows the same structure as for theorem \ref{teo:main}. The main ingredient
in both cases is the computation of \NO bodies in terms of Zariski decompositions
which can be found as Theorem 6.4 in \cite{LazMus09}.
Although Lazarsfeld and Musta\c{t}a proved this fact for smooth surfaces,
the result applies on a normal surface $X$ as long as the flag is centered 
at a smooth point $O$ of $X$. Indeed, using a resolution of singularities 
$\pi:\tilde X \rightarrow X$ which is an isomorphism in a neighbourhood of $O$,
one may apply \cite[Theorem 6.4]{LazMus09} to the pullback divisor $\pi^*D$
with respect to the pulled back flag, because
Zariski decompositions agree via pull-backs (see \cite[2.3]{Pro})
and intersection numbers agree by the projection formula.

\paragraph{Higher dimension}
Our results depend on the existence of a Zariski decomposition.
Decompositions with similar properties exist on some higher dimensional
varieties as well (for instance, on toric varieties)  and in
that case one can expect the behaviour of \NO bodies to be related 
to the decomposition similarly to what happens for surfaces.

Given a Cartier \(\R\)-divisor \(D\) on \(X\), a Zariski decomposition of \(D\)
in the sense of Cutkosky-Kawamata-Moriwaki (or simply a CKM-Zariski
decomposition) is an equality
\[ \pi^*D = P + N \]
on a smooth birational modification 
\(\pi:\tilde X \rightarrow X\) such that 
\begin{enumerate}
 \item \(P\) is nef,
 \item \(N\) is effective,
 \item all sections of multiples of \(D\) are carried by \(P\), i.e., the natural
 maps 
 \[H^0(\tilde X,\mathcal O_{\tilde X}(\lfloor kP\rfloor))\longrightarrow 
 H^0(\tilde X,\mathcal O_{\tilde X}(\lfloor k\pi^*D\rfloor))=
 H^0( X,\mathcal O_{X}(\lfloor kD\rfloor))\]
 are bijective for all \(k\ge0\).
\end{enumerate}
See Y. Prokhorov's survey \cite{Pro} for more on  CKM-Zariski
decompositions and other generalizations. 
Such decompositions
don't always exist \cite{Cut} and when they do, \(P\) and \(N\) may be
irrational even if \(D\) is an integral divisor. 
But if they do exist, for instance if
\(X\) is a toric variety \cite{Kaw87},
\NO bodies centered at a given point \(O\) will be governed by the 
the Zariski decomposition:

\begin{Pro}\label{pro:highdim}
  Let $D, D'$ be big Cartier divisors on a variety $X$, 
admitting a CKM-Zariski decomposition
and let $O\in X$ a  point.
 If  $D$ and $D'$ are numerically equivalent near $O$, i.e.,
 their CKM-Zariski decompositions satisfy \eqref{eq:num_equiv}.
 Then for all admissible flags with center \(O\),
$\Delta_{\Y}(D)=\Delta_{\Y}(D')$.
\end{Pro}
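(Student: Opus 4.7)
The plan is to run, in arbitrary dimension, the implication \eqref{teo:num_equiv}$\Rightarrow$\eqref{teo:center} of Theorem \ref{teo:main}, with the surface Zariski decomposition replaced by its CKM counterpart. The key fact is the ``splitting identity''
\[
\Delta_\Y(\mu^*D)\;=\;\nu_\Y(N)+\Delta_\Y(P),
\]
valid on any smooth birational model $\mu:\hat X\to X$ carrying a CKM decomposition $\mu^*D=P+N$ of a big Cartier divisor $D$ and an admissible flag $\Y$; here $\nu_\Y$ is extended $\R$-linearly to effective $\R$-divisors via local equations of components.

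To prove this identity, I would pick $\hat X$ to simultaneously dominate the (smooth) models of the two decompositions and of the flag, resolving if necessary, and pull everything back. By the defining CKM property, multiplication by the canonical section of the effective $\R$-divisor $k\mu^*D-\lfloor kP\rfloor=kN+\{kP\}$ gives a natural isomorphism $H^0(\hat X,\O_{\hat X}(\lfloor kP\rfloor))\cong H^0(\hat X,\O_{\hat X}(k\mu^*D))=H^0(X,\O_X(kD))$; for corresponding sections $s$ and $t$ this yields
\[
\nu_\Y(t)\;=\;\nu_\Y(s)+k\,\nu_\Y(N)+\nu_\Y(\{kP\}).
\]
Since the coefficients of the fractional part $\{kP\}=kP-\lfloor kP\rfloor$ lie in $[0,1)$, the term $\nu_\Y(\{kP\})$ is uniformly bounded in $k$; dividing by $k$ and passing to the closure yields the identity.

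With the identity in hand, I would combine it with the two parts of the hypothesis on $D$ and $D'$. From $P\equiv P'$, the numerical invariance of \NO bodies (the $\R$-divisor version of \cite[Proposition~4.1]{LazMus09}) gives $\Delta_\Y(P)=\Delta_\Y(P')$. For the translation vectors, any irreducible component $E$ of $N$ whose image in $X$ avoids $O$ must satisfy $p\notin E$ (otherwise $O=\mu(p)\in\mu(E)$), hence contributes zero to $\nu_\Y$; the remaining components, whose images contain $O$, are exactly those of $N_O$, which equals $N'_O$ by hypothesis. Therefore $\nu_\Y(N)=\nu_\Y(N')$, and $\Delta_\Y(D)=\Delta_\Y(D')$ follows.

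The main obstacle I expect is proving the splitting identity rigorously in the CKM setting: since $P$ is genuinely an $\R$-divisor while the bijection of sections is of integral divisors, the floor/fractional-part bookkeeping must be done carefully. In dimension two this is essentially \cite[Theorem~6.4]{LazMus09}; in arbitrary dimension the uniform boundedness of $\nu_\Y(\{kP\})$ is the crucial input that makes the limit computation go through, and it is the only place where ``integrality issues'' can intervene.
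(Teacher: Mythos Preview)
Your approach is correct and rests on the same two ingredients as the paper's: numerical invariance of \NO bodies for the positive parts, and the vanishing of \(\nu_\Y\) on any divisor whose support misses the center. The organization differs, though. You package everything into a global ``splitting identity'' \(\Delta_\Y(\mu^*D)=\nu_\Y(N)+\Delta_\Y(P)\) and then compare the two sides for \(D\) and \(D'\). The paper instead avoids this identity: it first replaces \(D'\) by the numerically equivalent \(\R\)-divisor \(D''=D'+(P-P')\), so that \(D\) and \(D''\) share the same positive part \(P\); then \(D\) and \(D''\) differ only by components of the negative part not passing through \(O\), and a short lemma shows that adding a multiple \(\lambda E\) with \(O\notin\pi(E)\) to the negative part leaves every \(\Delta_\Y\) centered at \(O\) unchanged (because a local equation of \(E\) is a unit near \(p\)).

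What each buys: your route gives a clean, reusable statement (the translation formula), at the price of exactly the technicality you flagged---making sense of \(\Delta_\Y(P)\) for an honest \(\R\)-divisor \(P\) and matching it with the closure of \(\{\nu_\Y(s)/k:s\in H^0(\lfloor kP\rfloor)\}\). The paper's reduction sidesteps this by never needing \(\Delta_\Y(P)\) as a separate object: once \(P=P'\), the comparison of \(\Delta_\Y(D)\) and \(\Delta_\Y(D'')\) is done directly on sections of \(kD\) versus \(kD''\), which differ by an invertible factor near \(p\). So the paper's argument is a bit more elementary, while yours is more structural; both are valid.
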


It should be expected that a converse statement similar to what holds
for surfaces be valid in higher dimension. In fact, 
the proof of lemma \ref{lem:negative} below can be easily adapted to the higher
dimensional setting, so \(N_O\) is indeed determined by the 
\NO bodies centered at \(O\). The methods of this note are however not 
suffucient to show that the positive part is also determined by the 
\NO bodies centered at \(O\).

We work over an algebraically closed field. 

\paragraph{Acknowledgement} The author greatly benefited from conversations
with A.~K\"uronya on the contents of this work. 

\section{Proofs} \label{sec:proofs}
 \paragraph{Local numerical equivalence implies equal \NO bodies}
Let us first prove \(\eqref{teo:num_equiv} \Rightarrow \eqref{teo:center}\)
in theorems \ref{teo:main} and \ref{teo:smooth}, and at the same time
proposition \ref{pro:highdim}.
So assume that $D$ and $D'$ are big Cartier divisors on a variety \(X\),
numerically equivalent near $O$, i.e.,
with refined CKM-Zariski decompositions satisfying \eqref{eq:num_equiv}:
\[
\pi^*D=P+N_O+N_O^c, \qquad \pi^*D'=P'+N_O+{N'}^c_O
\]
with \( P\equiv P' \). 
By Lazarsfeld--Musta\c t\u a \cite[Theorem 4.1]{LazMus09}, 
all  Newton-Okounkov bodies of \(D'\) and
\[
D''=D'+(P-P')=P+N_O+{N'}^c_O
\]
coincide, because $D''$ and $D'$ are numerically equivalent.
Thus for the proof of proposition \ref{pro:highdim} and
\(\eqref{teo:num_equiv} \Rightarrow \eqref{teo:center}\)
in theorem \ref{teo:main} 
it is not restrictive to assume that $P=P'$. 
Then there is a sequence of divisors
\[
D=D_0, D_1, \dots, D_k=D'
\]
whose CKM-Zariski decompositions $\pi^*D_i=P+N_i$ have the same positive
part and $N_i$ differs from $N_{i+1}$ in a  multiple of a 
base divisor $E_i$
with $O\not\in \pi(E_i)$. Thus the desired implication follows from the following:
\begin{Lem}
 Let \(D,D'\) be two big Cartier divisors with respective refined Zariski decompositions
 \[
\pi^*D=P+N, \quad \pi^*D'=P+(N+\lambda E)
\]
with $\lambda\in \R$, and \(O\) any point $O\not\in \pi(E)$. 
Then for all admissible flags with center \(O\),
$\Delta_{\Y}(D)=\Delta_{\Y}(D')$.
\end{Lem}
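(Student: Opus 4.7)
The plan is to pass to a common smooth birational model and exploit the CKM property to reduce the comparison to a single valuation computation on $E$. Take a smooth projective variety $Z$ dominating both $\tilde X$ (via $\rho$) and the model on which $\Y$ lives, chosen so that the latter map is an isomorphism near the center $p$ of $\Y$. Then $\nu_{\Y}$ can be computed from the pulled-back flag on $Z$, whose center $p_Z$ satisfies $\pi(\rho(p_Z))=O$. Since $O\notin \pi(E)$, we get $\rho(p_Z)\notin E$, i.e., $p_Z\notin \rho^{-1}(E)$, and therefore $\nu_{\Y}$ vanishes on any local equation of $\rho^{*}E$ at $p_Z$.

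By the CKM property, for every $k$ the natural inclusions
\[
H^0\bigl(\tilde X, \O(\lfloor kP\rfloor)\bigr) \hookrightarrow H^0\bigl(\tilde X, \O(\lfloor k(P+N)\rfloor)\bigr) = H^0(X, \O_X(kD))
\]
and its analogue for $D'$ are bijections, realized by multiplication by the canonical sections $\tau_k$ and $\tau'_k$ of the effective divisors $\lfloor k(P+N)\rfloor - \lfloor kP\rfloor$ and $\lfloor k(P+N+\lambda E)\rfloor - \lfloor kP\rfloor$, respectively. For $t\in H^0(\tilde X, \O(\lfloor kP\rfloor))$ with corresponding sections $\phi = t\cdot \tau_k \in H^0(X,\O_X(kD))$ and $\phi' = t\cdot \tau'_k\in H^0(X,\O_X(kD'))$, the rational function $\tau'_k/\tau_k \in K(X)^*$ has divisor $\lfloor k(P+N+\lambda E)\rfloor - \lfloor k(P+N)\rfloor$, which is supported only on $E$ because adding $\lambda E$ alters the coefficient of no other prime divisor. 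The first paragraph then gives $\nu_{\Y}(\tau'_k/\tau_k)=0$, so $\nu_{\Y}(\phi)=\nu_{\Y}(\phi')$. This bijection preserves $\nu_{\Y}$-values at every level $k$, yielding $\Delta_{\Y}(D)=\Delta_{\Y}(D')$.

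The only technical nuisance I expect is the last claim about the support of the divisor difference, which has to be checked even when the coefficients involved are irrational; it reduces to a componentwise observation that on every prime divisor other than $E$ the coefficients in $k(P+N+\lambda E)$ and $k(P+N)$ coincide, hence so do their floors. A secondary point to keep tidy is the identification of sections of $\O_X(kD)$ and $\O_X(kD')$ with rational functions for the purpose of evaluating $\nu_{\Y}$: since $D'-D$ pulls back to $\lambda E$, which is supported away from $p_Z$, local trivializations of $\O_X(D)$ and $\O_X(D')$ at $p_Z$ differ by a $\nu_{\Y}$-unit and the above computation goes through unchanged.
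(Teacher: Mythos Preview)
Your argument is correct and follows essentially the same approach as the paper: the key observation in both is that any local equation of $E$ is a unit near the center of $\Y$, so $\nu_{\Y}(E)=0$, and sections of $kD$ and $kD'$ differ only by something supported on $E$. The paper states this in four lines, going directly from $\pi^*D'-\pi^*D=\lambda E$ to the bijection of sections; your detour through $H^0(\tilde X,\O(\lfloor kP\rfloor))$ via the CKM property and the canonical sections $\tau_k,\tau'_k$ is a valid but unnecessary elaboration of the same idea.
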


\begin{proof}
An equation of $E$ is invertible
in a neighborhood of $O$, and therefore also in a neighborhood
of every point \(p\) infinitely near to \(O\). 
So for every flag $\Y$ centered at
$O$, $\nu_\Y(E)=0$. Since global sections of $\lfloor kD\rfloor$ and 
$\lfloor kD'\rfloor$
differ exactly in $\lfloor k\lambda E\rfloor$, their values under $\nu_\Y$ 
agree, and therefore the \NO bodies are the same.
 \end{proof}

Now the following lemma is enough to finish the proof of 
\(\eqref{teo:num_equiv} \Rightarrow \eqref{teo:center}\)
in theorem \ref{teo:smooth}:
\begin{Lem}
 Let \(D,D'\) be two big Cartier divisors on a normal surface \(X\)
  with respective refined Zariski decompositions
 \[
D=P+(N+\lambda C), \quad D'=P+(N+\lambda' C')
\]
with $\lambda,\lambda'\in \R$, and $C, C'$ irreducible curves
 with no smooth branch through 
\(O\), and satisfying \(\lambda\F_C=\lambda'\F_{C'}\)
Then for all infinitesimal admissible flags 
\(\{X\overset{\pi}\longleftarrow \tilde X \supset E\supset \{p\}\}\)
 with center at a given smooth point \(O\) such that the cluster \(K(p)\) is free
 and  \(p\not\in \F_{C}\),
$\Delta_{\Y}(D)=\Delta_{\Y}(D')$.
\end{Lem}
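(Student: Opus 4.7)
The plan is to reduce this lemma to the preceding one by establishing first the valuation identity
\[
\lambda\,\nu_\Y(\pi^*C) \equ \lambda'\,\nu_\Y(\pi^*C')
\]
in $\Z^2$, and then rerunning the fixed-part bijection argument. Write the chain $K(p)=\{O=p_0,\dots,p_k=p\}$; freeness of $K(p)$ makes $p$ lie on a unique component of $\pi^{-1}(O)$, namely the exceptional divisor $E_{k-1}$ obtained by blowing up $p_{k-1}$, so necessarily $Y_1=E_{k-1}$. The hypothesis $\lambda\F_C=\lambda'\F_{C'}$ forces the underlying clusters to coincide, and I set $l=\max\{i:p_i\in\F_C\}$, with $l<k$ by assumption.

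For the first coordinate of $\nu_\Y$, freeness of $K(p)$ (each $p_i$ is proximate only to $p_{i-1}$) yields
\[
\nu_1(\pi^*C)\equ\ord_{E_{k-1}}(\pi^*C)\equ\sum_{i=0}^{k-1}\mult_{p_i}(C).
\]
The minimal-model characterization of $\F_C$ says that on the blowup of $\F_C$ the strict transform $\tilde C$ meets $\pi^{-1}(O)$ only at satellite points; hence the free point $p_{l+1}\notin\F_C$ is not on $\tilde C$, and since blowing up a point off a strict transform does not reintroduce it, neither is any $p_i$ with $l<i\le k$. So $\mult_{p_i}(C)=0$ for $i>l$, giving $\nu_1(\pi^*C)=\sum_{i=0}^l\mult_{p_i}(C)$. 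The analogous formula for $C'$ combined with $\lambda\F_C=\lambda'\F_{C'}$ yields $\lambda\,\nu_1(\pi^*C)=\lambda'\,\nu_1(\pi^*C')$. The same vanishing makes $\pi^*C$ a pure multiple of $E_{k-1}$ in a neighborhood of $p$, so after dividing by the appropriate power of a local equation of $E_{k-1}$ one obtains a unit at $p$, whence $\nu_2(\pi^*C)=0=\nu_2(\pi^*C')$.

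The conclusion now follows the preceding lemma's pattern. The refined Zariski decompositions pull back (cf.~\cite[2.3]{Pro}) to Zariski decompositions on $\tilde X$ with common positive part $\pi^*P$ and negative parts differing precisely by $\lambda\pi^*C-\lambda'\pi^*C'$. For $k$ sufficiently divisible, multiplication by the respective fixed divisors gives canonical isomorphisms $H^0(X,kD)\cong H^0(\tilde X,\lfloor k\pi^*P\rfloor)\cong H^0(X,kD')$, and composing yields a bijection under which $\nu_\Y$ shifts by $k(\lambda'\,\nu_\Y(\pi^*C')-\lambda\,\nu_\Y(\pi^*C))=0$. Consequently the level-$k$ sets $\{\nu_\Y(s)/k\}$ coincide, and taking closures delivers $\Delta_\Y(D)=\Delta_\Y(D')$.

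The only delicate step is the inductive verification that $\tilde C\subset\tilde X$ misses each $p_i$ for $i>l$, which hinges on the minimal-model characterization of $\F_C$; once that is in hand, the rounding corrections from the $\Q$-valued negative parts are harmless, since the shift of $\nu_\Y$ already vanishes at the $\Q$-level and standard density closes the gap in the closure defining $\Delta_\Y$.
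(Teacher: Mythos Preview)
Your argument is correct and follows essentially the same route as the paper: identify $E$ with the last exceptional $E_{p_{k-1}}$, compute $\ord_E(\pi^*C)=\sum_i\mult_{p_i}(C)$ using freeness of $K(p)$, observe that only the $p_i\in\F_C$ contribute, check that $C,C'$ miss $p$ so $\nu_2=0$, and conclude $\lambda\,\nu_\Y(C)=\lambda'\,\nu_\Y(C')$ before invoking the fixed-part bijection. Your treatment is in fact slightly more explicit than the paper's in justifying why $\mult_{p_i}(C)=0$ once $p_i\notin\F_C$ and in spelling out the $\nu_2$ computation.
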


Note that the cluster \(\F_C=\F_{C'}\) is finite, and its weights 
are the multiplicities of \(C\) (equivalently, \(C'\)
at each \(p\in\F_C\). The equality \(\lambda\F_C=\lambda'\F_{C'}\)
means that both clusters consist of the same points, and their 
respective weights \(m, m'\) satisfy the proportionality
\(\lambda m(p)=\lambda'm'(p)\) for all \(p \in\F_C\).

\begin{proof}
Let 
\(\{X\overset{\pi}\longleftarrow \tilde X \supset E\supset \{p\}\}\)
be an infinitesimal admissible flag with center \(O\) 
such that the cluster \(K(p)=\{O=p_0,p_1,\dots,p_{k-1},p_k=p\}\) is free
 and  \(p\not\in \F_{C}\).
Let \(E'\) be the birational transform  of \(E\) in 
the blowup \(X_p\):
\[ X=X_0\overset{bl_O}\longleftarrow
 X_1\overset{bl_{p_1}}\longleftarrow\dots\overset{bl_{p_{k-1}}}\longleftarrow
 X_k=X_p.
 \]
Since \(p\in E'\), \(E'\) is an irreducible curve (is not contracted in \(X_p\)), 
and since \(E'\) contracts to the smooth point
\(O\), it must be one of the exceptional components;  
in fact it must be the last, \(E'=E_{p_{k-1}}\), which is the only one containing \(p\).
Thus it is not restrictive to assume that \(\tilde X=X_p\),
\(\pi=bl_O\circ bl_{p_1}\circ\dots\circ bl_{p_{k-1}}\), and \(E=E_{p_{k-1}}\).

The order of vanishing of \(\pi^*C\) along \(E\) is 
\[
\ord_E (\pi^*C)=\sum_{i=0}^{k-1} \mult_{p_i} \tilde C = 
\sum_{i=0}^{k-1} m(p_i)
\]
(where \(m(p_i)=0\) if \(p_i\not\in \F_C\)) because the \(p_i\) are free; 
similarly, \(\ord_E (\pi^*C')=\sum_{i=0}^{k-1} m'(p_i)\).
Moreover,
 \(C\) and \(C'\) do not pass through \(p\). 
So \(\lambda\nu_{Y_\bullet}(C)=\lambda'\nu_{Y_\bullet}(C')\). 
Therefore we conclude
as in the previous lemma: 
since global sections of $\lfloor kD\rfloor$ and 
$\lfloor kD'\rfloor$
differ exactly in $\lfloor k\lambda (C-C')\rfloor$, their values under $\nu_\Y$ 
agree, and therefore the \NO bodies are the same.
\end{proof}

\paragraph{Equality of infinitesimal bodies implies equality of proper bodies}
Now we prove \( \eqref{teo:infinitesimal} \Rightarrow \eqref{teo:curve} \)
in theorem \ref{teo:main} and 
\(\eqref{teo:infinitesimal_smooth}\Rightarrow \eqref{teo:curve_smooth}\)
in theorem \ref{teo:smooth}, namely we need to show that 
if $\Delta_{\Y}(D)=\Delta_{\Y}(D')$
 for all infinitesimal admissible flags \(\Y\) with center \(O\)
 (resp. for almost all infinitesimal admissible flags 
 \(\{\tilde X \supset E\supset \{p\}\}\)
 with center \(O\) such that the cluster \(K(p)\) is free)
then the same equality holds for
all proper admissible  flags \(\Y\) with center \(O\),
(resp. for all smooth admissible  flags \(\Y\) with center \(O\)).

Given a curve \(C\) through \(O\),
an infinite cluster \(K=\{p_0=O, p_1, \dots, p_k, \dots\}\) will be called
a \emph{branch cluster} for \(C\) if 
each \(p_i\) is infinitely near to \(p_{i-1}\) and all of them belong to
 \(C\). Note that in a branch cluster, at most
finitely many points are satellite, and  \(C\) has a smooth branch 
at \(O\) if and only if it admits a branch cluster which is free.

 Associated to each branch cluster there is a sequence of flags
\begin{equation}
 \label{eq:seq_flags}
 Y_{\bullet}^{(k)}=\{X_k \supset E_{p_{k-1}}\supset \{p_k\}\},  
\end{equation}
 and a corresponding sequence of valuations 
 $\nu^{(k)}=\nu_{Y_\bullet^{(k)}}$.
\begin{Lem}
Let \((\nu^{(k)})_{k\in\N}\) be the valuations associated to a branch cluster for
the irreducible curve \(C\) through \(O\). Let \(k_0\) be such that 
the birational transform
of \(C\) at \(p_{k_0}\) is smooth, and let
 $\Y=\{\tilde X \supset \tilde C \supset \{p_k\}\}$ be the corresponding
 proper admissible flag. Then
for every $\phi \in K(X)$  and every \(k\gg 0\) there is an equality
 \[
 \nu^{(k)}(\phi)=\begin{pmatrix}
 k-k_0&1\\1& 0
\end{pmatrix} \nu_{\Y}(\phi).
 \]
% \[
% (\nu_1^{(n)},\nu_2^{(n)})=(n \nu_1+\nu_2,\nu_1)
% \]
 \end{Lem}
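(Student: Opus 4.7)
The plan is to compute both $\nu^{(k)}(\phi)$ and $\nu_\Y(\phi)$ explicitly in local coordinates at $p_{k_0}$ and to track the coordinate change along the tail of the branch cluster. Both sides of the asserted identity are additive under multiplication of functions, so it is enough to verify it for $\phi$ in the local ring $\mathcal{O}_{X_{k_0},p_{k_0}}$; the case of a general rational function then follows by writing $\phi=f/g$ and taking $k$ large enough that the formula is already valid for both $f$ and $g$.

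First, I would choose local coordinates $(s,t)$ at $p_{k_0}$ such that $\tilde C=\{t=0\}$, which is possible precisely because the strict transform of $C$ at $p_{k_0}$ is smooth. For $\phi=\sum_{a,b\ge 0}c_{ab}\,s^a t^b$ one then reads off directly
\[
\nu_\Y(\phi)=(b_0,a_0),\quad b_0=\min\{b:\exists a,\ c_{ab}\ne 0\},\quad a_0=\min\{a:c_{a,b_0}\ne 0\}.
\]
Next, I would show by induction on $j$ that one can take local coordinates $(s,w_j)$ at $p_{k_0+j}$ with $E_{p_{k_0+j-1}}=\{s=0\}$, strict transform of $\tilde C$ equal to $\{w_j=0\}$, and change of variables $t=s^j w_j$. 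The main obstacle is keeping this inductive coordinate setup clean: at each step the strict transform of $\tilde C$ must be transverse to the last exceptional divisor, equivalently the next point of the cluster must be free. Both conditions hold for $k\gg 0$ by the very definition of a branch cluster of an irreducible curve, which is why the identity is only claimed in that range.

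Substituting $t=s^j w_j$ into $\phi$ yields
\[
\phi=\sum_{a,b\ge 0}c_{ab}\,s^{a+jb}\,w_j^b,
\]
so that $\nu^{(k_0+j)}_1(\phi)=\min\{a+jb:c_{ab}\ne 0\}$ and $\nu^{(k_0+j)}_2(\phi)$ equals the $w_j$-order of $(\phi/s^{\nu^{(k_0+j)}_1})|_{s=0}$. An elementary observation then finishes the proof: for $j>a_0$, the minimum of $a+jb$ on the support of $\phi$ is attained uniquely at $(a_0,b_0)$, because any other $(a,b)$ in the support satisfies either $b=b_0$ and $a>a_0$, or $b>b_0$ and then $(a-a_0)+j(b-b_0)\ge j-a_0>0$. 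Only the term $c_{a_0,b_0}w_j^{b_0}$ survives the restriction to $s=0$, so $\nu^{(k_0+j)}(\phi)=(jb_0+a_0,\,b_0)$, which is precisely $\begin{pmatrix}k-k_0&1\\ 1&0\end{pmatrix}\nu_\Y(\phi)$ with $k=k_0+j$, as required.
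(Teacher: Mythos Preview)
Your proof is correct. The approach differs from the paper's: rather than choosing explicit coordinates and tracking monomial supports under the substitution $t=s^j w_j$, the paper works intrinsically by writing $\phi=g^{\nu_1(\phi)}\phi_1$ (with $g$ a local equation of $\tilde C$) and computing $\ord_{E_{p_{k-1}}}$ on each factor separately, invoking Noether's formula for intersection multiplicities to identify $\nu_2(\phi)=\sum_{i\ge k_0}\mult_{p_i}D$ with $\ord_{E_{p_{k-1}}}\phi_1$ for $k$ large. Your argument is more elementary and self-contained (no external citation needed), and it makes the threshold for ``$k\gg 0$'' completely explicit as $k>k_0+a_0$; the paper's argument is coordinate-free and perhaps makes the geometric reason for the eventual stabilisation (the divisor of $\phi_1$ eventually misses the branch cluster) more transparent. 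One minor remark: your aside about needing the next point to be free is not actually a constraint here---once the strict transform of $C$ is smooth at $p_{k_0}$, every subsequent point of the branch cluster is automatically free, so your inductive coordinate change is valid for all $j\ge 1$ without further hypothesis.
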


\begin{proof}
Assume without loss of generality that \(\phi\) is a regular 
function on a neighbourhood of \(p_{k_0}\).
 Recall the definition of \(\nu_{\Y}(\phi)\):
  \(\nu_1(\phi)=\ord_C(\phi)\), \(\phi_1=\phi/g^{\nu_1(\phi)}\),
  where \(g\) is a local equation of \(\tilde C\) at \(p_{k_0}\),
  and  \(\nu_2(\phi)=\ord_{p_{k_0}}(\phi_1|_C)\). \(\phi_1\)
  is the local equation of some effective divisor \(D\) which does not
  contain \(C\), and hence, by Noether's formula
  for intersection multiplicities, \cite[Theorem 3.3.1]{Cas00},
  there is \(k_1\) such that \(D\) does not go through any point \(p_k, k\ge k_1\)
  and \(\nu_2(\phi)=\sum_{i=k_0}^{k_1}\mult_{p_i}D=\ord_{E_{p_{k-1}}}\phi_1\)
  for all \(k\ge k_1\). 
On the other hand, it is immediate that \(\ord_{E_{p_{k-1}}}g=k-k_0\)
for all \(k \ge \max\{k_0,1\}\), so 
\(\nu^{(k)}_1(\phi)=\ord_{E_{p_{k-1}}}(\phi)=(k-k_0)\nu_1(\phi)+\nu_2(\phi)\)
for all \(k\ge \max\{k_1,1\}\). Similarly, \(\nu^{(k)}_1(\phi_1)=0\) for \(k\ge k_1\)
and \(\nu^{(k)}_1(g)=1\), and the claim follows.
\end{proof}
\begin{Cor}\label{cor:infiniteness}
 Let \(D, D\) be arbitrary Cartier divisors on a surface \(X\), and \(O\in X\)
 a smooth point.
 Given a proper admissible flag
  \(Y_\bullet=\{X\overset{\pi}\longleftarrow\tilde X \supset C \supset \{p\}\}\) 
  centered at \(O\), and a branch cluster \(K=\{p_,\dots,p_k,\dots\}\) 
  for \(\pi(C)\),
  denote \(Y_{\bullet}^{(k)}\) the sequence of flags 
  \eqref{eq:seq_flags}. If the the set of indices \(k\) with 
  \(\Delta_{Y_{\bullet}^{(k)}}(D)=\Delta_{Y_{\bullet}^{(k)}}(D')\) is infinite, 
  then \(\Delta_{Y_{\bullet}}(D)=\Delta_{Y_{\bullet}}(D')\).
\end{Cor}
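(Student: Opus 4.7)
The plan is to leverage the preceding lemma, which for each $\phi \in K(X)^*$ gives the pointwise identity
\[
\nu^{(k)}(\phi) \equ M_k\, \nu_{\Y}(\phi), \qquad M_k \deq \begin{pmatrix} k-k_0 & 1 \\ 1 & 0 \end{pmatrix},
\]
for all $k$ sufficiently large (depending on $\phi$). Since $\det M_k = -1$, the matrix $M_k$ is invertible over $\Z$ for every $k$, and this unimodular invertibility is the crux of the argument.

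From this I would deduce that $\Delta_{Y_{\bullet}^{(k)}}(D) \equ M_k\, \Delta_{\Y}(D)$ for $k$ sufficiently large, and analogously for $D'$. Granting this, the hypothesis supplies an infinite set of indices $k$ with $\Delta_{Y_{\bullet}^{(k)}}(D) \equ \Delta_{Y_{\bullet}^{(k)}}(D')$, so one chooses such a $k$ large enough that the transformed identities apply simultaneously to $D$ and to $D'$. Then
\[
M_k \Delta_{\Y}(D) \equ \Delta_{Y_{\bullet}^{(k)}}(D) \equ \Delta_{Y_{\bullet}^{(k)}}(D') \equ M_k \Delta_{\Y}(D'),
\]
and applying $M_k^{-1}$ yields the desired $\Delta_{\Y}(D) \equ \Delta_{\Y}(D')$.

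The substantive step is the identity $\Delta_{Y_{\bullet}^{(k)}}(D) \equ M_k \Delta_{\Y}(D)$ for $k \gg 0$. Since $M_k$ is continuous, $M_k \overline{S} \equ \overline{M_k S}$ for any set $S$, so it suffices to compare the rescaled value sets $\{\nu^{(k)}(\phi)/m\}$ and $\{M_k \nu_{\Y}(\phi)/m\}$ indexed by $\phi \in H^0(mD) \setminus \{0\}$ and $m \ge 1$. The preceding lemma equates these section-by-section once $k \ge K(\phi)$. On a surface, $\Delta_{\Y}(D)$ is a polygon by \cite{KLM}, hence is described by finitely many witnessing sections. A uniform bound for $K(\phi)$ as $\phi$ ranges over $H^0(mD)$ at each fixed $m$ follows as in the proof of the previous lemma, from the bounded intersection of the residual divisor $\operatorname{div}(\phi_1)$ with the branch cluster. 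Combining these, the equality promotes to the level of bodies for $k$ beyond the finitely many relevant thresholds.

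The main obstacle is precisely this uniformity: the threshold $K(\phi)$ is not globally bounded as $\phi$ varies over all sections of all degrees, so the promotion from pointwise to set-theoretic equality requires some additional structural input---either reducing to the finitely many witness sections dictated by the polygonal structure of $\Delta_{\Y}(D)$ and $\Delta_{\Y}(D')$, or arguing via a Hausdorff-convergence $M_k^{-1} \Delta_{Y_{\bullet}^{(k)}}(D) \to \Delta_{\Y}(D)$ as $k$ tends to infinity along the given infinite sequence.
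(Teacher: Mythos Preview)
The paper does not actually prove this corollary: it says only that ``the proof of the corollary is straightforward and is left to the reader.'' So there is no argument to compare against, and your task is precisely to supply one.

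Your strategy---transport the pointwise relation \(\nu^{(k)}(\phi)=M_k\,\nu_{\Y}(\phi)\) from the preceding lemma to an equality of bodies \(\Delta_{Y_\bullet^{(k)}}(D)=M_k\,\Delta_{\Y}(D)\) for large \(k\), then invert \(M_k\)---is the natural one and is almost certainly what the author has in mind. You are also right that the only real issue is uniformity: the threshold \(K(\phi)\) is not bounded as \(\phi\) ranges over all sections of all multiples, so the set-level equality does not follow formally from the section-level one.

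Your first proposed fix is the correct route, but you should make the closing step explicit. By \cite{KLM} the body \(\Delta_{\Y}(D)\) is a rational polygon; its finitely many vertices are valuative (this can be read off from the Zariski--decomposition description in \cite[Theorem~6.4]{LazMus09}), say \(v_i=\nu_{\Y}(\phi_i)/m_i\). Taking \(k\) from the given infinite set with \(k\ge\max_i K(\phi_i)\) (and likewise for \(D'\)) gives \(M_k v_i\in\Delta_{Y_\bullet^{(k)}}(D)\), hence by convexity
\[
M_k\,\Delta_{\Y}(D)\ \subseteq\ \Delta_{Y_\bullet^{(k)}}(D).
\]
Now compare areas: both convex bodies have Euclidean area \(\vol(D)/2\), and \(|\det M_k|=1\), so the inclusion is an equality. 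The same holds for \(D'\), and inverting \(M_k\) gives \(\Delta_{\Y}(D)=\Delta_{\Y}(D')\). Without this area argument (or an equivalent device) the reverse inclusion \(\Delta_{Y_\bullet^{(k)}}(D)\subseteq M_k\,\Delta_{\Y}(D)\) is left hanging, which is the one point in your write-up that is not yet justified.
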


The proof of the corollary is straightforward and is left to the reader.

Now the desired implications in theorem \ref{teo:main} and 
\ref{teo:smooth} follow, because every curve \(C\) through \(O\) 
(resp. smooth at \(O\))
admits a branch cluster (resp. a free branch cluster), and statement
\eqref{teo:infinitesimal} in theorem \ref{teo:main} (resp.
\eqref{teo:infinitesimal_smooth} in theorem \ref{teo:smooth}) 
imply the infiniteness needed in corollary \ref{cor:infiniteness}.

\paragraph{Equality of proper bodies implies local numerical equivalence}
Finally we prove \( \eqref{teo:curve} \Rightarrow \eqref{teo:num_equiv} \)
in theorem \ref{teo:main} and 
\(\eqref{teo:curve_smooth}\Rightarrow \eqref{teo:equiv_smooth}\)
in theorem \ref{teo:smooth}.
We deal separately with the positive and negative parts, because for
the positive part it is enough to consider smooth proper flags:

\begin{Lem}
 Let \(D\) and \(D'\) be big Cartier divisors with Zariski decompositions
\[ D=P+N, \quad D'=P'+N'. \]
 Assume that, for all curves
 \(C\subset X\) smooth at \(O\), the bodies
 \(\Delta_\Y(D)=\Delta_\Y(D')\) agree for the flag 
 \(\Y=\{X\supset C\supset \{O\}\}\). Then \(P\equiv P'\).
\end{Lem}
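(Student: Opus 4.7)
The plan is to exploit the explicit formula of Lazarsfeld--Musta\c t\u a \cite[Theorem 6.4]{LazMus09} for surface \NO bodies attached to smooth flags. As the paper has already observed, this result transfers from the smooth to the normal setting by pulling back along a resolution $\pi:\tilde X\to X$ that is an isomorphism near $O$, since Zariski decompositions commute with such pullbacks and intersection numbers are preserved by the projection formula. For a smooth proper flag $\Y=\{X\supset C\supset\{O\}\}$ with $C$ not a component of $N$ or $N'$, the formula describes $\Delta_\Y(D)$ as the region
\[
\{(t,y)\in\R^2 : 0\le t\le \nu,\ \alpha(t)\le y\le \alpha(t)+P_t\cdot C\},
\]
where $P_t$ is the positive part of the Zariski decomposition of $D-tC$ and $\nu$ is the pseudoeffective threshold of $D$ along $C$. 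The crucial observation is that the vertical slice at $t=0$ has length exactly $P\cdot C$, since $P_0=P$.

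Hence the hypothesis $\Delta_\Y(D)=\Delta_\Y(D')$ immediately yields $P\cdot C = P'\cdot C$ for every irreducible curve $C$ smooth at $O$, passing through $O$, and not a component of $N\cup N'$. To upgrade this to $P\equiv P'$, I show that the numerical classes of such curves span $N^1(X)_\R$. Fix any ample class $A$ on $X$; for $m\gg 0$, $|mA|$ is very ample and separates $2$-jets at the smooth point $O$. By a standard Bertini argument, a general element of the sublinear system of divisors in $|mA|$ through $O$ is irreducible, smooth at every point (in particular at $O$), and distinct from the finitely many components of $N\cup N'$. Such a member realises the numerical class $mA$, so we obtain $P\cdot A=P'\cdot A$ for every ample $A$. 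Since the ample cone spans $N^1(X)_\R$, this forces $P\equiv P'$.

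The main technical obstacle is the Bertini argument producing suitable smooth curves through $O$ in prescribed numerical classes, but this is routine for smooth points on a normal projective surface. The real content of the proof is that \cite[Theorem 6.4]{LazMus09} encodes the single intersection number $P\cdot C$ as the width of the \NO polygon at its left edge, from which the numerical class of $P$ is reconstructed by dualising against a spanning family of curves; no information about the negative part is needed for this half of the argument, which is precisely why smooth proper flags suffice here.
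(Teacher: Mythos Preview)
Your proof is correct and follows essentially the same route as the paper: both arguments read off $P\cdot C$ as the length of the slice of $\Delta_{\Y}(D)$ over $t=0$ via \cite[Theorem 6.4]{LazMus09}, and then test against sufficiently many curves smooth at $O$ lying in ample classes spanning $N^1(X)$. The paper fixes a basis $L_1,\dots,L_\rho$ of ample classes and uses Serre vanishing to produce curves $C_i\in|kL_i|$ with prescribed $2$-jet at $O$, whereas you argue for every ample $A$ via Bertini---but this is only a cosmetic difference.
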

\begin{proof}
 Choose ample divisor classes $L_1, \dots, L_\rho$ whose $\Q$-span
 is all of the rational N\'eron-Severi space $N^1(X)_\Q$.
 Replacing each $L_i$ by a suitable multiple, we can assume that
 it is the class of an irreducible curve $C_i$ smooth at $O$,
 whose tangent direction there is different from every tangent direction
 to a component of the augmented base locus which may pass
 through \(O\). 
 (This is well known, and can be proved as follows: 
 by Serre vanishing there exist $k$ such that 
 $H^1(X,\mathcal I_O^2 ( L_i^{\otimes k}))=0$
 where $\mathcal I_O$ denotes the ideal sheaf of the point $O$ in $X$.
 Then the exact sequence in cohomology determined by 
 \[ 0 \longrightarrow \mathcal I_O^2 \otimes L_i^{\otimes k} \longrightarrow
 \mathcal O_X ( L_i^{\otimes k}) \longrightarrow
 (\mathcal O_X/\mathcal I^2_O) ( L_i^{\otimes k}) \longrightarrow 0\]
 shows that $H^0(X,\O_X ( L_i^{\otimes k}))$ surjects onto
 $H^0(X,\mathcal O_X/\mathcal I^2_O)$ and in particular it is possible
 to find a section in $H^0(X,O_X ( L_i^{\otimes k}))$ which vanishes
 at $O$ to order exactly 1 and has preassigned image=tangent direction). 
 
 So for each $i=1,\dots,\rho$, we can compute \NO bodies
 of $D$ and $D'$ with respect to the flag 
 $\Y^{(i)}=\{ X \supset C_i \supset \{O\}\}$, 
 and by
 \cite[Theorem 6.4]{LazMus09}, the height of the intersection of 
 $\Delta_{\Y^{(i)}}(D)$
 with the second coordinate axis
 equals $P\cdot L_i$; since by hypothesis the bodies
 $\Delta_{\Y^{(i)}}(D)=\Delta_{\Y^{(i)}}(D')$
 coincide, it follows that $P\cdot L_i=P'\cdot L_i$ for all \(i\). 
 Therefore $P\equiv P'$.
\end{proof}

\begin{Lem}\label{lem:negative}
  Let \(D\) and \(D'\) be big Cartier divisors with refined Zariski decompositions
 \[ D=P+N_O+N^c_O, \quad D'=P'+N'_O+{N'}^c_O, \]
and assume that
 for all proper admissible  flags with center \(O\),
$\Delta_{\Y}(D)=\Delta_{\Y}(D')$. Then \(N_O=N'_O\).
  \end{Lem}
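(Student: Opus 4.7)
The plan is to recover each irreducible component of $N_O$ separately, together with its coefficient, by means of a carefully chosen proper admissible flag whose horizontal member is (the strict transform of) that component. The main tool will be \cite[Theorem~6.4]{LazMus09}, which, as discussed just before Section~\ref{sec:proofs} and pushed through a resolution $\pi:\tilde X\to X$, says that for any flag $\Y=\{\tilde X\supset Y_1\supset\{p\}\}$ on a smooth model the minimum of the first coordinate on $\Delta_{\Y}(\pi^{*}D)$ equals $\ord_{Y_{1}}(\tilde N)$, where $\pi^{*}D=\tilde P+\tilde N$ is the Zariski decomposition of $\pi^{*}D$.

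Fix an irreducible component $E$ of $N_O$ with coefficient $a_E>0$. I take $\pi:\tilde X\to X$ to factor as a resolution of singularities of $X$ (which can be chosen to be an isomorphism near $O$) followed by further point blowups over $O$ that resolve the singularity of $E$ at $O$; choose a point $p\in\tilde E\cap\pi^{-1}(O)$ at which the strict transform $\tilde E$ is smooth. Then $\Y=\{\tilde X\supset\tilde E\supset\{p\}\}$ is a proper admissible flag centered at $O$, so by hypothesis $\Delta_{\Y}(D)=\Delta_{\Y}(D')$. By pull-back compatibility of Zariski decomposition (\cite[2.3]{Pro}), the coefficient of $\tilde E$ in $\tilde N$ is exactly $a_E$, while the coefficient of $\tilde E$ in the negative part of $\pi^{*}D'$ equals the coefficient of $E$ in $N'$ (or $0$ if $E$ is not contained in the support of $N'$). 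Equality of the leftmost first coordinates therefore forces $E$ to be a component of $N'$ with coefficient $a_E$, and since $E$ passes through $O$, it must lie in $N'_O$. Exchanging the roles of $D$ and $D'$ yields the opposite inclusion, whence $N_O=N'_O$.

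The most delicate point, I expect, will be to justify that the pull-back $\pi^{*}N$ really is the negative part of the Zariski decomposition of $\pi^{*}D$ on $\tilde X$, and that no exceptional curve contaminates the coefficient of $\tilde E$: this is precisely the content of the pull-back compatibility cited from \cite[2.3]{Pro}, combined with the fact that $\tilde E$, being a non-exceptional strict transform, receives no contribution from the pull-backs of the other components of $N$.
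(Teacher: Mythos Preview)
Your argument is correct and follows essentially the same route as the paper: pick a component of $N_O$, pass to a model where its strict transform is smooth over $O$, build the proper admissible flag on that strict transform, and read off the coefficient as the leftmost first coordinate of the \NO body via \cite[Theorem~6.4]{LazMus09}. The extra care you take with pull-back compatibility of the Zariski decomposition and the non-contamination of the coefficient of $\tilde E$ by exceptional divisors is exactly what the paper leaves implicit in the phrase ``by the proof of \cite[Theorem~6.4]{LazMus09}''.
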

\begin{proof}
Let $C$ be a component of $N_O$, and let 
 $\pi:\tilde X\longrightarrow X$ be a proper birational morphism 
 such that the strict transform $\tilde C$ of $C$ is nonsingular
 at $\pi^{-1}(O)$; let $p$ a point in $\tilde C\cap \pi^{-1}(O)$,
 and $\Y=p\in \tilde C \subset \tilde X$, which is a proper
 admissible flag with center at \(O\). The first coordinate
 of the leftmost point in $\Delta_{\Y}(D)$ is the
 coefficient of $C$ in $N_O$ by the proof of  \cite[Theorem 6.4]{LazMus09},
 so this is also the coefficient of $C$ in $N'_O$. Doing this for each component,
 we obtain $N_O=N'_O$ as claimed.
\end{proof}
This finishes the proof of \( \eqref{teo:curve} \Rightarrow \eqref{teo:num_equiv} \)
in theorem \ref{teo:main}; to conclude with  
\(\eqref{teo:curve_smooth}\Rightarrow \eqref{teo:equiv_smooth}\)
in theorem \ref{teo:smooth} we need another lemma:

\begin{Lem}
Fix $O\in X$ a smooth point. 
 Let $D, D'$ be big Cartier divisors on $X$, with refined Zariski decompositions
\[
 D=P+N_O^{sing}+N_O^{sm}+{N}^c_O, \quad
  D'=P'+{N'}_O^{sing}+{N'}_O^{sm}+{N'}^c_O,
\]
and assume that
 for all smooth admissible  flags with center \(O\),
$\Delta_{\Y}(D)=\Delta_{\Y}(D')$. Then \(N_O^{sm}={N'}_O^{sm}\) and
  \(\mathcal F_{N_O^{sing}}=\mathcal F_{{N'}_O^{sing}}\).

  \end{Lem}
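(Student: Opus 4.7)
The plan is to extend the argument of Lemma \ref{lem:negative}: rather than resolving each component of \(N_O\) by a birational modification (not available for smooth flags on \(X\)), one reads off from the NO bodies the aggregated multiplicities of \(N_O\) at every free infinitely near point to \(O\), and then disentangles the contributions of the smooth and the singular branches.

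Let \(\gamma\) be a smooth curve through \(O\) that is not a component of \(N\) nor of \(N'\) (only finitely many smooth curves are excluded). Set \(\Y=\{X\supset \gamma \supset \{O\}\}\). By \cite[Theorem~6.4]{LazMus09}, the leftmost vertex of \(\Delta_{\Y}(D)\) lies at \((0,\alpha_D)\) with
\[
\alpha_D \equ \ord_O\!\bigl(N|_\gamma\bigr) \equ \sum_{C\subset N_O} a_C\,(C\cdot \gamma)_O,
\]
where \(a_C\) is the coefficient of \(C\) in \(N\). Writing the free branch cluster of \(\gamma\) as \(O=q_0,q_1,q_2,\dots\), Noether's formula \cite[Theorem~3.3.1]{Cas00} together with \(m_{q_k}(\gamma)=1\) (smoothness of \(\gamma\)) gives
\[
\alpha_D \equ \sum_{k\ge 0} w_D(q_k), \qquad w_D(p)\deq \sum_{C\subset N_O} a_C\, m_p(C).
\]
Only finitely many summands are nonzero: distinct global irreducible curves on \(X\) cannot share a formal branch at \(O\) (otherwise their local equations would share a factor in the factorial ring \(\O_{X,O}\), contradicting finite global intersection), so each \(C\) has only finitely many infinitely near points in common with \(\gamma\). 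By hypothesis, \(\alpha_D \equ \alpha_{D'}\) for every such \(\gamma\).

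To recover \(w_D(q)\) at each free infinitely near point \(q\) of depth \(s\), I would use a telescoping trick. By the Serre-vanishing argument used in the preceding positive-part lemma, any prescribed finite free-branch behaviour at \(O\) can be realised by a smooth curve on \(X\), and the generic such curve avoids any prescribed finite set of infinitely near points at each subsequent depth. Hence one can pick smooth curves \(\gamma,\gamma'\) through \(O\) whose branch clusters both start with \(\{O,q_1,\dots,q_{s-1}\}\), with \(\gamma\) then passing through \(q\) while \(\gamma'\) passes through some distinct generic point at depth \(s\), and with all later infinitely near points of both curves lying outside every component of \(N_O\cup {N'}_O\). Then
\[
\alpha_D^{\gamma} - \alpha_D^{\gamma'} \equ w_D(q),
\]
and the corresponding equality for \(D'\) yields \(w_D(q) \equ w_{D'}(q)\) for every free infinitely near point \(q\) to \(O\).

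Finally, I would decode \(N_O^{sm}\) and \(\mathcal F_{N_O^{sing}}\) from the function \(w_D\). Its support in the tree of free infinitely near points to \(O\) decomposes as the union of finitely many infinite rays (one for each smooth branch \(B\) of a component \(C\) of \(N_O^{sm}\)) together with a finite set of extra points (the initial free parts of the singular branches of components of \(N_O\)). Along a ray \((p_k(B))_{k\ge 0}\), the value \(w_D(p_k(B))\) stabilises at \(a_C\) for \(k\gg 0\), because eventually \(B\) is the only branch of any component of \(N_O\) through \(p_k(B)\) and \(m_{p_k(B)}(B)=1\); moreover the global irreducible curve \(C\) containing \(B\) is uniquely determined by \(B\) (again by factoriality). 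This recovers every component of \(N_O^{sm}\) together with its coefficient, so \(N_O^{sm}\equ {N'}_O^{sm}\). Subtracting the now-known contribution of \(N_O^{sm}\) from \(w_D\) leaves \(\sum_{C\in N_O^{sing}} a_C\, m_p(C)\), which is supported precisely on \(\mathcal F_{N_O^{sing}}\) with its defining weights; the same reasoning for \(D'\) then gives \(\mathcal F_{N_O^{sing}}\equ \mathcal F_{{N'}_O^{sing}}\).

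The main obstacle is the telescoping step: one needs the existence of smooth curves on \(X\) realising arbitrary finite prescribed free branch behaviour with ``generic'' continuations (a standard Serre-vanishing application), together with the clean separation of the infinite-ray (smooth-branch) contributions from the finite (singular-branch) ones in \(w_D\).
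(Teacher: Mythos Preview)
Your argument is correct, and the telescoping via smooth test curves through prescribed free infinitely near points is exactly the device the paper uses for the equality \(\mathcal F_{N_O^{sing}}=\mathcal F_{{N'}_O^{sing}}\). The organizational difference lies in how \(N_O^{sm}\) is handled. The paper disposes of it first and separately: for each component \(C\) of \(N_O^{sm}\) it takes the flag \(\{X\supset C\supset\{O\}\}\) and reads the coefficient of \(C\) as the first coordinate of the leftmost point of \(\Delta_\Y(D)\) (the argument of Lemma~\ref{lem:negative}), then reduces to \(N_O^{sm}={N'}_O^{sm}=0\) before running the telescoping. You instead run the telescoping on all of \(N_O\) to obtain the function \(w_D\) on the tree of free infinitely near points, and only afterwards decode \(N_O^{sm}\) from the infinite rays and their asymptotic values, then subtract to isolate \(\mathcal F_{N_O^{sing}}\). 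Your route is a bit longer but more uniform, and it cleanly covers the case where a component of \(N_O^{sm}\) has a smooth branch through \(O\) without itself being smooth at \(O\) (so that \(\{X\supset C\supset\{O\}\}\) is not an admissible flag), a case the paper's ``same proof as the previous lemma'' does not literally address. The existence of smooth curves with prescribed initial free branch behaviour and generic continuation, which you flag as the main obstacle, is indeed available via the Serre-vanishing argument already used for the positive part (applied on the blow-up of the prescribed free cluster).
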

\begin{proof}
The equality  \(N_O^{sm}={N'}_O^{sm}\) follows with the same proof of
the previous lemma. Let us now show that   
\(\mathcal F_{N_O^{sing}}=\mathcal F_{{N'}_O^{sing}}\). 
Without loss of generality we may assume that  \(N_O^{sm}={N'}_O^{sm}=0\).

For each point \(p\in F_{N_O^{sing}}\), 
let \(C_p\) be a smooth curve going through
\(p\) and missing all points in \(F_{N_O^{sing}} \cup F_{{N'}_O^{sing}}\)
infinitely near to \(p\), and let \(Y_\bullet=\{X\supset C\supset\{O\}\). 
By \cite[Theorem 6.4]{LazMus09}, the least \(\alpha\)
such that \((0,\alpha)\in \Delta_{Y_\bullet}\) is \(\ord_O({N_O^{sing}}|_{C_p}\).
So the hypothesis tells us that 
\begin{equation}
 \label{eq:int_from_NO}
 \ord_O({N_O^{sing}}|_{C_p}=\ord_O({{N'}_O^{sing}}|_{C_p}
\end{equation}
If \(p=O\), then the two sides of the preceding inequality equal the weights of
\(O\) in \(\mathcal F_{N_O^{sing}}\) and \(\mathcal F_{{N'}_O^{sing}}\)
respectively.

If \(p\ne O\), let \(q\) be the point preceding \(p\) and \(C_q\) the corresponding
curve. The weights of \(p\) in \(\mathcal F_{N_O^{sing}}\) and \(\mathcal F_{{N'}_O^{sing}}\) now equal  the differences
\begin{gather*}
 m(p)=\ord_O({N_O^{sing}}|_{C_p})\,-\,\ord_O({N_O^{sing}}|_{C_q}) ,\\
m'(p)=\ord_O({{N'}_O^{sing}}|_{C_p})\,-\, \ord_O({{N'}_O^{sing}}|_{C_q}) 
\end{gather*}
respectively
 so they also coincide.
 
 We have proved that every point of the cluster \(\mathcal F_{N_O^{sing}}\) 
 appears in \(\mathcal F_{{N'}_O^{sing}}\) with the same weight; by symmetry,
 we conclude that both weighted clusters are in fact equal.
\end{proof}

\bigskip
\footnotesize
  \textsc{Departament de Matem\`atiques, 
  Universitat Aut\`onoma de Barcelona, 08193 Bellaterra (Barcelona)
Spain} \par  
  \textsc{Barcelona Graduate School of Mathematics} \par
  \textit{E-mail address}, \texttt{jroe@mat.uab.cat} 
\end{document}